\newtheorem{theorem}{Theorem}[section]
\newtheorem{lemma}[theorem]{Lemma}
\newtheorem{corollary}[theorem]{Corollary}
\theoremstyle{definition}
\theoremstyle{remark}
\numberwithin{equation}{section}
\begin{document}
\title{Unitarily invariant norm inequalities for operators}

\author[M. Erfanian Omidvar , M.S. Moslehian, A. Niknam]{M. Erfanian Omidvar $^1$, M. S.
Moslehian$^2$ and A. Niknam$^3$ }
\address{$^1$ Department of Mathematics, Faculty of science,
Islamic Azad University-Mashhad Branch, Mashhad , Iran.}
\email{erfanian@mshdiau.ac.ir}
\address{$^{2, 3}$ Department of Pure Mathematics, Center of Excellence in
Analysis on Algebraic Structures (CEAAS), Ferdowsi University of Mashhad,
P.O. Box 1159, Mashhad 91775, Iran.} \email{moslehian@ferdowsi.um.ac.ir and
moslehian@ams.org} \email{dassamankin@yahoo.co.uk}

\subjclass[2010]{Primary 47A62; secondary 46C15, 47A30, 15A24.}

\keywords{Bounded linear operator; Hilbert space; Norm inequality; Operator norm; Schatten $p$-norm, Unitarily invariant norm.}
\maketitle

\begin{abstract}
We present several operator and norm inequalities for Hilbert space
operators. In particular, we prove that if
$A_{1},A_{2},\ldots,A_{n}\in {\mathbb B}({\mathscr H})$, then
\[|||A_{1}A_{2}^{*}+A_{2}A_{3}^{*}+\cdots+A_{n}A_{1}^{*}|||\leq\left|\left|\left|\sum_{i=1}^{n}A_{i}A_{i}^{*}\right|\right|\right|,\]
for all unitarily invariant norms.\\
We also show that if $A_{1},A_{2},A_{3},A_{4}$ are projections in
${\mathbb B}({\mathscr H})$, then
\begin{eqnarray*}
&&\left|\left|\left|\left(\sum_{i=1}^{4}(-1)^{i+1}A_{i}\right)\oplus0\oplus0\oplus0\right|\right|\right|\nonumber\\&\leq&|||(A_{1}+|A_{3}A_{1}|)\oplus\nonumber
(A_{2}+|A_{4}A_{2}|)\oplus(A_{3}+|A_{1}A_{3}|)\oplus(A_{4}+|A_{2}A_{4}|)|||
\end{eqnarray*}
for any unitarily invariant norm.
\end{abstract}

\section{Introduction and preliminaries}

Let ${\mathbb B}({\mathscr H})$ stand for the $C^{*}$-algebra of all
bounded linear operators on a complex Hilbert space ${\mathscr H}$
with inner product $\langle\cdot, \cdot\rangle$ and let $I$ denote
the identity operator. For $A\in {\mathbb B}({\mathscr H})$, let
$\|A\|=\sup\{\|Ax\|:\|x\|=1\}$ denote the usual operator norm of $A$
and $|A|=(A^{*}A)^{1/2}$ be the absolute value of $A$. For $1\leq p<
\infty$, the Schatten $p$-norm of a compact operator $A$ is defined
by $\|A\|_{p}=({\rm tr}|A|^{p})^{1/p}$, where ${\rm tr}$ is the
usual trace functional. If $A$ and $B$ are operators in ${\mathbb
B}({\mathscr H})$ we use $A\oplus B$ to denote the $2\times2$
operator matrix $\left[\begin{array}{cc}
 A & 0 \\
  0 & B
\end{array}\right]$, regarded as an operator on ${\mathscr H}\oplus {\mathscr H}$. One can show that
\begin{eqnarray}
\label{1.1} \|A\oplus B\|&=&\max(\|A\|,\|B\|)\\
\label{1.2}\|A\oplus
B\|_{p}&=&\left(\|A\|_{p}^{p}+\|B\|_{p}^{p}\right)^{1/p}\end{eqnarray}
An operator $A\in {\mathbb B}({\mathscr H})$ is positive and write
$A\geq0$ if $\langle A(x),x\rangle\geq 0$ for all $x\in {\mathscr
H}$. we say $A\leq$ B whenever $B-A\geq0$.

We consider the wide class of unitarily invariant norms
$|||\cdot|||$. Each of these norms is defined on an ideal in
${\mathbb B}({\mathscr H})$ and it will be implicitly understood
that when we talk of $|||T|||$, then the operator $T$ belongs to the
norm ideal associated with $|||\cdot|||$. Each unitarily invariant
norm $|||\cdot|||$ is characterized by the invariance property
$|||UTV|||=|||T|||$ for all operators $T$ in the norm ideal
associated with $|||\cdot|||$ and for all unitary operators $U$ and
$V$ in ${\mathbb B}({\mathscr H})$. The following are easily follows
from the basic properties of unitarily invariant norms
\begin{eqnarray}
 \label{1.3}|||A\oplus A^{*}|||&=&|||A\oplus A|||,\\
\label{1.4} |||A\oplus
B|||&=&\left|\left|\left|\left[\begin{array}{cc}
  0 & A \\
  B & 0
\end{array}\right]\right|\right|\right|\\
\label{1.5}|||AA^{*}|||&=&|||A^{*}A|||
\end{eqnarray}
for all operators $A,B\in {\mathbb B}({\mathscr H})$. For the general theory of unitarily invariant norms, we refer the reader to Bhatia and Simon \cite{[1], [12]}. \\
It follows from the Fan dominance principle (see \cite{[7]}) that
the following three inequalities for all unitarily invariant norms
are equivalence:
\begin{eqnarray}\label{1.6} |||A|||&\leq&|||B|||,\\
\label{1.7}|||A\oplus0|||&\leq&|||B\oplus0|||,\\
\label{1.8}|||A\oplus A|||&\leq&|||B\oplus B|||.
\end{eqnarray}
It has been shown by Kittaneh \cite{[9]} that if
$A_{1},A_{2},B_{1},B_{2},X,$ and $Y$ are operators in ${\mathbb
B}({\mathscr H})$, then
\[2|||(A_{1}XA_{2}^{*}+B_{1}YB_{2}^{*})\oplus
0|||\]
\begin{eqnarray}\label{1.9}\leq\left|\left|\left|\left[\begin{array}{cc}
 A_{1}^{*}A_{1}X+XA_{2}^{*}A_{2} & A_{1}^{*}B_{1}Y+XA_{2}^{*}B_{2} \\
  B_{1}^{*}A_{1}X+YB_{2}^{*}A_{2} & B_{1}^{*}B_{1}Y+YB_{2}^{*}B_{2}
\end{array}\right]\right|\right|\right|
\end{eqnarray}
for all unitarily invariant norms.\\ It has been shown by Bhatia and
Kittaneh \cite{[2]} that if $A$ and $B$ are operators in ${\mathbb
B}({\mathscr H})$, then
\begin{eqnarray}\label{1.10}|||A^{*}B+B^{*}A|||\leq
|||A^{*}A+B^{*}B|||,\end{eqnarray} for all unitarily invariant
norms. Kittaneh \cite{[10]} proved that if $A$ and $B$ are positive
operators in ${\mathbb B}({\mathscr H})$, then
\begin{eqnarray}\label{1.12}
|||(A+B)\oplus 0|||\leq \left|\left|\left|
\left(A+\left|B^{1/2}A^{1/2}\right|\right) \oplus
\left(B+\left|A^{1/2}B^{1/2}\right|\right)\right|\right|\right|\end{eqnarray}
for any unitarily invariant norm.

\noindent It was shown by Fong \cite{[4]} that if $A\in M_{n}(C)$,
then\begin{eqnarray}\label{1.13}\|AA^{*}-A^{*}A\|\leq\|A\|^{2},\end{eqnarray}
and it was shown by Kittaneh \cite{[8]}
that\begin{eqnarray}\label{1.14}\|AA^{*}+A^{*}A\|\leq\|A^{2}\|+\|A\|^{2}.\end{eqnarray}

In this paper we establish some operator and norm inequalities. We
generalize inequalities \eqref{1.9} and \eqref{1.10} and present a
norm inequality analogue to \eqref{1.12}. Based on our main result,
we provide new proofs of inequalities \eqref{1.13} and \eqref{1.14}.

\section{Main results}
To achieve our main result we need the following lemma.

\begin{lemma}\cite[Theorem 1]{[3]}
If $A$, $B$ and $X$ are operators in ${\mathbb B}({\mathscr H})$,
then
\begin{eqnarray}\label{2.1}2|||AXB^{*}|||\leq |||A^{*}AX+XB^{*}B|||\end{eqnarray}for any unitarily invariant norm.
\end{lemma}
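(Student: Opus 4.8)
The plan is to reduce \eqref{2.1} to the arithmetic–geometric mean inequality for \emph{positive} operators and then to prove the latter by a Schur–multiplier argument. First I would strip $A$ and $B$ down to their moduli. Writing the polar decompositions $A=U|A|$ and $B=V|B|$ with partial isometries $U,V$, one has $AXB^{*}=U\,|A|X|B|\,V^{*}$. Since every unitarily invariant norm obeys the contraction bound $|||CYD|||\le\|C\|\,\|D\|\,|||Y|||$ (a standard consequence of the invariance $|||UTV|||=|||T|||$, as a contraction is an average of unitaries) and $\|U\|,\|V\|\le1$, it follows that $|||AXB^{*}|||\le|||\,|A|X|B|\,|||$. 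Because $A^{*}A=|A|^{2}$ and $B^{*}B=|B|^{2}$, the right-hand side of \eqref{2.1} is exactly $|||\,|A|^{2}X+X|B|^{2}\,|||$. Hence it suffices to prove, for the positive operators $S=|A|^{2}$ and $T=|B|^{2}$, the inequality $2|||S^{1/2}XT^{1/2}|||\le|||SX+XT|||$.

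For this I would first take ${\mathscr H}$ finite-dimensional and diagonalize: choosing orthonormal bases in which $S=\mathrm{diag}(s_{1},\dots,s_{m})$ and $T=\mathrm{diag}(t_{1},\dots,t_{m})$ with all $s_{i},t_{j}>0$ (a limiting argument removes the positivity of the eigenvalues), the entries become $(S^{1/2}XT^{1/2})_{ij}=\sqrt{s_{i}t_{j}}\,x_{ij}$ and $(SX+XT)_{ij}=(s_{i}+t_{j})\,x_{ij}$. Therefore $2S^{1/2}XT^{1/2}=C\circ(SX+XT)$, the Hadamard product with the matrix $C=\big[\,2\sqrt{s_{i}t_{j}}/(s_{i}+t_{j})\,\big]$. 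The desired inequality thus reduces to the Schur–multiplier bound $|||C\circ Y|||\le|||Y|||$ for all $Y$, that is, to showing that $C$ induces a contractive Schur multiplier on every unitarily invariant norm.

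The main obstacle is exactly this bound, and I would resolve it through the positive definiteness of the hyperbolic secant. Substituting $s_{i}=e^{\xi_{i}}$ and $t_{j}=e^{\eta_{j}}$ gives $C_{ij}=\operatorname{sech}\!\big(\tfrac12(\xi_{i}-\eta_{j})\big)$, and the Fourier representation
\[
\operatorname{sech}(r)=\int_{-\infty}^{\infty}\frac{e^{\,ir\tau}}{2\cosh(\pi\tau/2)}\,d\tau
\]
exhibits $\operatorname{sech}$ as the Fourier transform of the nonnegative function $g(\tau)=1/\big(2\cosh(\pi\tau/2)\big)$ of total mass $\int_{-\infty}^{\infty}g(\tau)\,d\tau=1$. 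With the diagonal unitaries $d_{\tau}=\mathrm{diag}(e^{\,i\tau\xi_{i}/2})$ and $d_{\tau}'=\mathrm{diag}(e^{\,i\tau\eta_{j}/2})$ this yields $C\circ Y=\int_{-\infty}^{\infty}g(\tau)\,d_{\tau}Y\,d_{\tau}'^{*}\,d\tau$. Each elementary map $Y\mapsto d_{\tau}Y d_{\tau}'^{*}$ preserves every unitarily invariant norm, so the integral form of the triangle inequality gives $|||C\circ Y|||\le\int_{-\infty}^{\infty}g(\tau)\,|||Y|||\,d\tau=|||Y|||$, as required.

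Finally I would remove the finite-dimensionality assumption by the standard compression/approximation argument: applying the estimate to finite sections of $S$, $T$, $X$ and passing to the limit through the Fan dominance principle and the equivalences \eqref{1.6}–\eqref{1.8}. Chaining the three steps, $2|||AXB^{*}|||\le2|||\,|A|X|B|\,|||\le|||\,|A|^{2}X+X|B|^{2}\,|||=|||A^{*}AX+XB^{*}B|||$, which is \eqref{2.1}. I expect the reduction in the first paragraph to be routine; the delicate point is the $\operatorname{sech}$ representation underlying the Schur–multiplier contraction, and care is also needed in justifying the passage from finite sections to general operators in the norm ideal.
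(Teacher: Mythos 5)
Your proposal is correct, but note first that the paper does not prove this lemma at all: it is imported verbatim as \cite[Theorem 1]{[3]} (Bhatia--Davis), so there is no internal proof to diverge from --- what you have written is a self-contained reconstruction of the cited result, and it checks out. The reduction via polar decompositions $A=U|A|$, $B=V|B|$ to $2|||S^{1/2}XT^{1/2}|||\leq|||SX+XT|||$ for positive $S,T$ is sound, though for the step $|||CYD|||\leq\|C\|\,\|D\|\,|||Y|||$ the clean justification in infinite dimensions is the singular-value bound $s_{j}(CYD)\leq\|C\|\,\|D\|\,s_{j}(Y)$ together with Fan dominance, rather than ``a contraction is an average of unitaries,'' which requires Russo--Dye plus a continuity argument that is awkward in the ideal norm. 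Your diagonalization step implicitly uses two different unitaries $W,W'$ (one for $S$, one for $T$), which is legitimate precisely because unitarily invariant norms allow distinct unitaries on the two sides, and both $S^{1/2}XT^{1/2}$ and $SX+XT$ transform consistently under $Y=W^{*}XW'$. The heart of your argument is also right: $C_{ij}=2\sqrt{s_{i}t_{j}}/(s_{i}+t_{j})=\operatorname{sech}\bigl(\tfrac{1}{2}(\xi_{i}-\eta_{j})\bigr)$, your Fourier formula for $\operatorname{sech}$ is correct (substituting $u=\pi\tau/2$ reduces it to the classical self-duality of $\operatorname{sech}$), the weight $g(\tau)=1/(2\cosh(\pi\tau/2))$ is nonnegative with mass $\operatorname{sech}(0)=1$, and hence $Y\mapsto C\circ Y$ is an average of the norm-preserving maps $Y\mapsto d_{\tau}Yd_{\tau}'^{*}$, so contractive in every unitarily invariant norm. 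For comparison, the original Bhatia--Davis route avoids the two-parameter multiplier by the block embedding $S\oplus T$ with $\left[\begin{smallmatrix}0&X\\0&0\end{smallmatrix}\right]$ in place of $X$, reducing to the one-operator case and then invoking the fact that a positive semidefinite matrix with unit diagonal is a contractive Schur multiplier; your $\operatorname{sech}$ argument proves the mixed-eigenvalue multiplier bound directly and is, if anything, more explicit. The one point genuinely deserving the care you flag is the infinite-dimensional passage: since $|||\cdot|||$ lives on a norm ideal, the standard device is to verify the inequality for all Ky Fan norms via finite-rank compressions and conclude by Fan dominance, exactly as you sketch.
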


The main result below is an extension of \cite[Theorem 2.2]{[9]}. We
will prove it by an approach different from \cite[Theorem 2.2]{[9]}.

\begin{theorem}\label{th2.2}
Let $A_{i},B_{i},X_{i}\in {\mathbb B}({\mathscr H})$ for
$i,j=1,2\ldots,n$. Then
\begin{eqnarray*}
&&2|||\sum_{i=1}^{n}(A_{i}X_{i}B^{*}_{i})\oplus0\oplus\ldots\oplus0|||\\
&\leq&\left|\left|\left|\left[\begin{array}{cccc}
A_{1}^{*}A_{1}X_{1}+X_{1}B^{*}_{1}B_{1} & A_{1}^{*}A_{2}X_{2}+X_{1}B^{*}_{1}B_{2} & \cdots & A_{1}^{*}A_{n}X_{n}+X_{1}B^{*}_{1}B_{n}\\
A_{2}^{*}A_{1}X_{1}+X_{2}B^{*}_{2}B_{1} & A_{2}^{*}A_{2}X_{2}+X_{2}B^{*}_{2}B_{2} & \cdots & A_{2}^{*}A_{n}X_{n}+X_{2}B^{*}_{2}B_{n}\\
\vdots & \vdots &\ddots & \vdots\\
A_{n}^{*}A_{1}X_{1}+X_{n}B^{*}_{n}B_{1}&
A_{n}^{*}A_{2}X_{2}+X_{n}B^{*}_{n}B_{2} & \cdots
&A_{n}^{*}A_{n}X_{n}+X_{n}B^{*}_{n}B_{n}
\end{array}\right]\right|\right|\right|
\end{eqnarray*}
for all
unitarily invariant norms.
\end{theorem}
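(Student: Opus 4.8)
The plan is to deduce the theorem from inequality \eqref{2.1} by collecting the data $A_i,X_i,B_i$ into three block operators acting on the direct sum of $n$ copies of $\mathscr{H}$ and then invoking \eqref{2.1} exactly once. The single design decision that makes everything work is an \emph{asymmetric} packaging: the $A_i$ (and likewise the $B_i$) are placed in a single block row, whereas the $X_i$ are placed on the diagonal.

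Concretely, I would let $\mathcal{A}$ denote the $n\times n$ block operator whose first block row is $\left[A_{1}\ A_{2}\ \cdots\ A_{n}\right]$ and whose other entries vanish, define $\mathcal{B}$ in the same way from $B_{1},\ldots,B_{n}$, and set $\mathcal{X}=X_{1}\oplus X_{2}\oplus\cdots\oplus X_{n}$. The whole proof then reduces to computing three block products. A short bookkeeping with the block indices shows that $\mathcal{A}^{*}\mathcal{A}$ has $(j,k)$ entry $A_{j}^{*}A_{k}$, so $\mathcal{A}^{*}\mathcal{A}\mathcal{X}$ has $(j,k)$ entry $A_{j}^{*}A_{k}X_{k}$; symmetrically $\mathcal{X}\mathcal{B}^{*}\mathcal{B}$ has $(j,k)$ entry $X_{j}B_{j}^{*}B_{k}$. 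Adding the two gives precisely the block operator on the right-hand side of the statement, namely $\mathcal{A}^{*}\mathcal{A}\mathcal{X}+\mathcal{X}\mathcal{B}^{*}\mathcal{B}$.

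For the left-hand side, I would observe that since only the first rows of $\mathcal{A}$ and $\mathcal{B}$ are nonzero, the triple product $\mathcal{A}\mathcal{X}\mathcal{B}^{*}$ is supported entirely in the $(1,1)$ corner and equals $\left(\sum_{i=1}^{n}A_{i}X_{i}B_{i}^{*}\right)\oplus0\oplus\cdots\oplus0$. With both sides thus identified, applying \eqref{2.1} to the single triple $(\mathcal{A},\mathcal{X},\mathcal{B})$ yields $2|||\mathcal{A}\mathcal{X}\mathcal{B}^{*}|||\leq|||\mathcal{A}^{*}\mathcal{A}\mathcal{X}+\mathcal{X}\mathcal{B}^{*}\mathcal{B}|||$, which is exactly the asserted inequality.

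I do not expect a genuine obstacle: the block computations are routine, and the real content is choosing the row/diagonal layout so that \eqref{2.1} produces the desired cross terms. The one point deserving care is the bookkeeping of the off-diagonal entries $A_{j}^{*}A_{k}X_{k}+X_{j}B_{j}^{*}B_{k}$ for $j\neq k$, together with the remark that the zero padding automatically generated by $\mathcal{A}\mathcal{X}\mathcal{B}^{*}$ matches the direct sum with zeros on the left, which is harmless since a unitarily invariant norm ignores appended zero summands. As a consistency check, specializing to $n=2$ recovers \eqref{1.9}.
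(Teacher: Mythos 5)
Your proposal is correct and is essentially identical to the paper's own proof: the paper defines exactly the same three block operators on $\oplus_{i=1}^{n}\mathscr{H}$ (the $A_i$ and $B_i$ each packed into a single first block row, the $X_i$ on the diagonal) and applies inequality \eqref{2.1} once to the triple, with the same block-product computations identifying $\mathcal{A}\mathcal{X}\mathcal{B}^{*}$ and $\mathcal{A}^{*}\mathcal{A}\mathcal{X}+\mathcal{X}\mathcal{B}^{*}\mathcal{B}$. Nothing in your argument deviates from the paper's route, and your index bookkeeping is accurate.
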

\begin{proof}
Consider the following operators on $\oplus_{i=1}^{n}{\mathscr H}$
\[A=\left[\begin{array}{cccc}
A_{1} & A_{2} & \cdots & A_{n}\\
0& 0 & \cdots & 0\\
\vdots & \vdots &\ddots & \vdots\\
0& 0 & \cdots &0
\end{array}\right]\]and\[B=\left[\begin{array}{cccc}
B_{1} & B_{2} & \cdots & B_{n}\\
0& 0 & \cdots & 0\\
\vdots & \vdots &\ddots & \vdots\\
0& 0 & \cdots &0
\end{array}\right]\]and\[X=\left[\begin{array}{cccc}
X_{1} & 0 & \cdots & 0\\
0& X_{2} & \cdots & 0\\
\vdots & \vdots &\ddots & \vdots\\
0& 0 & \cdots &X_{n}
\end{array}\right].\]Then\[2|||\sum_{i=1}^{n}(A_{i}X_{i}B^{*}_{i})\oplus0\oplus\ldots\oplus0|||\\
\]\begin{eqnarray*}&=&2\left|\left|\left|\left[\begin{array}{cccc}
A_{1} & A_{2} & \cdots & A_{n}\\
0& 0 & \cdots & 0\\
\vdots & \vdots &\ddots & \vdots\\
0& 0 & \cdots &0
\end{array}\right]\left[\begin{array}{cccc}
X_{1} & 0 & \cdots & 0\\
0& X_{2} & \cdots & 0\\
\vdots & \vdots &\ddots & \vdots\\
0& 0 & \cdots &X_{n}
\end{array}\right]\left[\begin{array}{cccc}
B_{1}^{*} &0 & \cdots & 0\\
B_{2}^{*}& 0 & \cdots & 0\\
\vdots & \vdots &\ddots & \vdots\\
B_{n}^{*}& 0 & \cdots &0
\end{array}\right]\right|\right|\right|\\&=&2|||AXB^{*}|||\\&\leq&|||A^{*}AX+XB^{*}B|||\qquad\qquad\quad\quad\quad\quad\quad\quad\quad\quad\quad\quad(\rm{by~
inequality~} \eqref{2.1})\\
&=&\left|\left|\left|\left[\begin{array}{cccc}
A_{1}^{*} &0 & \cdots & 0\\
A_{2}^{*}& 0 & \cdots & 0\\
\vdots & \vdots &\ddots & \vdots\\
A_{n}^{*}& 0 & \cdots &0
\end{array}\right]\left[\begin{array}{cccc}
A_{1} & A_{2} & \cdots & A_{n}\\
0& 0 & \cdots & 0\\
\vdots & \vdots &\ddots & \vdots\\
0& 0 & \cdots &0
\end{array}\right]\left[\begin{array}{cccc}
X_{1} & 0 & \cdots & 0\\
0& X_{2} & \cdots & 0\\
\vdots & \vdots &\ddots & \vdots\\
0& 0 & \cdots &X_{n}
\end{array}\right]\right.\right.\right.\\
&&+ \left.\left.\left.\left[\begin{array}{cccc}
X_{1} & 0 & \cdots & 0\\
0& X_{2} & \cdots & 0\\
\vdots & \vdots &\ddots & \vdots\\
0& 0 & \cdots &X_{n}
\end{array}\right]\left[\begin{array}{cccc}
B_{1}^{*} &0 & \cdots & 0\\
B_{2}^{*}& 0 & \cdots & 0\\
\vdots & \vdots &\ddots & \vdots\\
B_{n}^{*}& 0 & \cdots &0
\end{array}\right]\left[\begin{array}{cccc}
B_{1} & B_{2} & \cdots & B_{n}\\
0& 0 & \cdots & 0\\
\vdots & \vdots &\ddots & \vdots\\
0& 0 & \cdots &0
\end{array}\right]\right|\right|\right|\\&=&\left|\left|\left|\left[\begin{array}{cccc}
A_{1}^{*}A_{1}X_{1}+X_{1}B^{*}_{1}B_{1} & A_{1}^{*}A_{2}X_{2}+X_{1}B^{*}_{1}B_{2} & \cdots & A_{1}^{*}A_{n}X_{n}+X_{1}B^{*}_{1}B_{n}\\
A_{2}^{*}A_{1}X_{1}+X_{2}B^{*}_{2}B_{1} & A_{2}^{*}A_{2}X_{2}+X_{2}B^{*}_{2}B_{2} & \cdots & A_{2}^{*}A_{n}X_{n}+X_{2}B^{*}_{2}B_{n}\\
\vdots & \vdots &\ddots & \vdots\\
A_{n}^{*}A_{1}X_{1}+X_{n}B^{*}_{n}B_{1}&
A_{n}^{*}A_{2}X_{2}+X_{n}B^{*}_{n}B_{2} & \cdots
&A_{n}^{*}A_{n}X_{n}+X_{n}B^{*}_{n}B_{n}
\end{array}\right]\right|\right|\right|.\end{eqnarray*}
\end{proof}
\begin{corollary}
Let $A_{1},A_{2},\ldots,A_{n}\in {\mathbb B}({\mathscr H})$. Then
\[\left|\left|\left|A_{1}A_{2}^{*}+A_{2}A_{3}^{*}+\cdots+A_{n}A_{1}^{*}\right|\right|\right|\leq\left|\left|\left|\sum_{i=1}^{n}A_{i}A_{i}^{*}\right|\right|\right|,\]
for all unitarily invariant norms. In particular,
\begin{eqnarray*}\|A_{1}A_{2}^{*}+A_{2}A_{3}^{*}+\cdots+A_{n}A_{1}^{*}\|_{p}\leq \left\|\sum_{i=1}^{n}A_{i}A_{i}^{*}\right\|_{p}\quad{for\  \ 1\leq p\leq\infty}.
\end{eqnarray*}
\end{corollary}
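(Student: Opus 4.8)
The plan is to derive the Corollary directly from Theorem \ref{th2.2} by a judicious choice of the operators $A_i, B_i, X_i$. Looking at the left-hand side of the theorem, I want the sum $\sum_{i=1}^{n} A_i X_i B_i^*$ to collapse into the cyclic expression $A_1 A_2^* + A_2 A_3^* + \cdots + A_n A_1^*$. The natural choice is to set $X_i = I$ for all $i$, take the ``$A_i$'' of the theorem to be the given $A_i$, and take the ``$B_i$'' of the theorem to be the cyclically shifted operators: $B_i = A_{i+1}$ for $i < n$ and $B_n = A_1$. With these substitutions each summand $A_i X_i B_i^*$ becomes $A_i A_{i+1}^*$ (indices mod $n$), so the left-hand side becomes $2\,|||(A_1 A_2^* + A_2 A_3^* + \cdots + A_n A_1^*)\oplus 0 \oplus \cdots \oplus 0|||$.

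First I would verify what the right-hand side of the theorem becomes under this substitution. The diagonal entries take the form $A_i^* A_i X_i + X_i B_i^* B_i = A_i^* A_i + A_{i+1}^* A_{i+1}$ (with $B_n = A_1$), while the off-diagonal entry in position $(i,j)$ is $A_i^* A_j + B_i^* B_j = A_i^* A_j + A_{i+1}^* A_{j+1}$. The key observation is that this entire $n\times n$ operator matrix factors: it equals $C^* C + D^* D$ where $C$ is the row operator $[A_1, A_2, \ldots, A_n]$ viewed as a $1\times n$ block and $D = [A_2, A_3, \ldots, A_n, A_1]$ is its cyclic shift. Since both $C^* C$ and $D^* D$ are unitarily equivalent (via the cyclic shift permutation unitary) to the respective matrices with the single nonzero diagonal block $\sum_i A_i A_i^*$, I would use the unitary invariance of the norm together with the identity \eqref{1.5} to simplify.

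The main step is then to show that $|||C^*C + D^*D|||$ reduces to $2\,|||(\sum_i A_i A_i^*)\oplus 0 \oplus\cdots\oplus0|||$, or more precisely to relate the right-hand side matrix to $\sum_i A_i A_i^*$ up to the factor $2$ and the equivalences \eqref{1.6}--\eqref{1.8}. Here I would use that $C C^* = \sum_{i} A_i A_i^*$ and $D D^* = \sum_i A_{i+1} A_{i+1}^* = \sum_i A_i A_i^*$ as well, so by \eqref{1.5} both $C^*C$ and $D^*D$ have the same unitarily invariant norm as $\sum_i A_i A_i^*$ after compression. Combining the two diagonal contributions and applying the triangle inequality for the norm yields the bound $|||C^*C + D^*D||| \le 2\,|||(\sum_i A_i A_i^*)\oplus 0\oplus\cdots\oplus 0|||$. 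Dividing the resulting chain of inequalities by $2$ and invoking the equivalence \eqref{1.6}$\Leftrightarrow$\eqref{1.7} to strip the direct summands of zeros gives exactly the claimed inequality.

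The part I expect to require the most care is the bookkeeping that identifies the full off-diagonal block matrix as $C^*C + D^*D$ and then controls its norm by $2|||\sum_i A_i A_i^*|||$; the triangle inequality gives $|||C^*C + D^*D||| \le |||C^*C||| + |||D^*D|||$, and each term equals $|||\sum_i A_i A_i^*|||$ by \eqref{1.5}, but one must check the compression-to-single-block step is compatible with the equivalences \eqref{1.6}--\eqref{1.8} so that the final peeling of the zero summands is legitimate. The specialization to the Schatten $p$-norms is then immediate, since each $\|\cdot\|_p$ for $1\le p \le \infty$ is a unitarily invariant norm.
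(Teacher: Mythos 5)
Your proposal is correct and follows essentially the same route as the paper: the identical substitution $X_i=I$, $B_i=A_{i+1}$ (cyclically) in Theorem \ref{th2.2}, the same splitting of the resulting block matrix into the two Gram matrices $C^*C+D^*D$ handled by the triangle inequality, the identity \eqref{1.5} to pass to $CC^*=\bigl(\sum_i A_iA_i^*\bigr)\oplus 0\oplus\cdots\oplus 0$ (and likewise for $D$), and finally the equivalence \eqref{1.6}$\Leftrightarrow$\eqref{1.7} to remove the zero summands. The only cosmetic difference is your remark about conjugating by the cyclic-shift permutation, which the paper does not need since \eqref{1.5} alone gives $|||D^*D|||=|||DD^*|||$.
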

\begin{proof}Letting $B_{i}=A_{i+1}$ for $i=1,2\ldots,n-1$ and
$B_{n}=A_{1}$ and $X_{i}=I$ in Theorem \ref{th2.2} ,  we get
\begin{eqnarray*}
&&2|||A_{1}A_{2}^{*}+A_{2}A_{3}^{*}+\cdots +A_{n}A_{1}^{*}\oplus0\cdots\oplus0|||\\
&\leq&\left|\left|\left|\left[\begin{array}{cccc}
A_{1}^{*}A_{1}+A^{*}_{2}A_{2} & A_{1}^{*}A_{2}+A^{*}_{2}A_{3} & \cdots & A_{1}^{*}A_{n}+A^{*}_{2}A_{1}\\
A_{2}^{*}A_{1}+A^{*}_{3}A_{2} & A_{2}^{*}A_{2}+A^{*}_{3}A_{3} & \cdots & A_{2}^{*}A_{n}+A^{*}_{3}A_{1}\\
\vdots & \vdots &\ddots & \vdots\\
A_{n}^{*}A_{1}+A^{*}_{1}A_{2}& A_{n}^{*}A_{2}+A^{*}_{1}A_{3} &
\cdots &A_{n}^{*}A_{n}+A^{*}_{1}A_{1}
\end{array}\right]\right|\right|\right|\\&\leq&\left|\left|\left|\left[\begin{array}{cccc}
A_{1}^{*}A_{1} & A_{1}^{*}A_{2} & \cdots & A_{1}^{*}A_{n}\\
A_{2}^{*}A_{1}& A_{2}^{*}A_{2} & \cdots & A_{2}^{*}A_{n}\\
\vdots & \vdots &\ddots & \vdots\\
A_{n}^{*}A_{1}& A_{n}^{*}A_{2} & \cdots &A_{n}^{*}A_{n}
\end{array}\right]\right|\right|\right|+\left|\left|\left|\left[\begin{array}{cccc}
A^{*}_{2}A_{2} & A^{*}_{2}A_{3} & \cdots & A^{*}_{2}A_{1}\\
A^{*}_{3}A_{2} & A^{*}_{3}A_{3} & \cdots &A^{*}_{3}A_{1}\\
\vdots & \vdots &\ddots & \vdots\\
A^{*}_{1}A_{2}& A^{*}_{1}A_{3} & \cdots &A^{*}_{1}A_{1}
\end{array}\right]\right|\right|\right|\\&=&\left|\left|\left|\left[\begin{array}{cccc}
A_{1}^{*} & 0 & \cdots & 0\\
A_{2}^{*}& 0 & \cdots & 0\\
\vdots & \vdots &\ddots & \vdots\\
A_{n}^{*}& 0 & \cdots &0
\end{array}\right]\left[\begin{array}{cccc}
A_{1} & A_{2} & \cdots & A_{n}\\
0& 0 & \cdots & 0\\
\vdots & \vdots &\ddots & \vdots\\
0& 0 & \cdots &0
\end{array}\right]\right|\right|\right|\\&&+\left|\left|\left|\left[\begin{array}{cccc}
A_{2}^{*} & 0 & \cdots & 0\\
A_{3}^{*}& 0 & \cdots & 0\\
\vdots & \vdots &\ddots & \vdots\\
A_{1}^{*}& 0 & \cdots &0
\end{array}\right]\left[\begin{array}{cccc}
A_{2} & A_{3} & \cdots & A_{1}\\
0& 0 & \cdots & 0\\
\vdots & \vdots &\ddots & \vdots\\
0& 0 & \cdots &0
\end{array}\right]\right|\right|\right|\\&=&\left|\left|\left|\left[\begin{array}{cccc}
A_{1} & A_{2} & \cdots & A_{n}\\
0& 0 & \cdots & 0\\
\vdots & \vdots &\ddots & \vdots\\
0& 0 & \cdots &0
\end{array}\right]\left[\begin{array}{cccc}
A_{1}^{*} & 0 & \cdots & 0\\
A_{2}^{*}& 0 & \cdots & 0\\
\vdots & \vdots &\ddots & \vdots\\
A_{n}^{*}& 0 & \cdots &0
\end{array}\right]\right|\right|\right|\\&&+\left|\left|\left|\left[\begin{array}{cccc}
A_{2} & A_{3} & \cdots & A_{1}\\
0& 0 & \cdots & 0\\
\vdots & \vdots &\ddots & \vdots\\
0& 0 & \cdots &0
\end{array}\right]\left[\begin{array}{cccc}
A_{2}^{*} & 0 & \cdots & 0\\
A_{3}^{*}& 0 & \cdots & 0\\
\vdots & \vdots &\ddots & \vdots\\
A_{1}^{*}& 0 & \cdots &0
\end{array}\right]\right|\right|\right|\hspace{0.5cm}\mbox{(\rm{by
Property
\eqref{1.5}})}\\&=&2\left|\left|\left|(\sum_{i=1}^{n}A_{i}A_{i}^{*})\oplus0\ldots\oplus0\right|\right|\right|\end{eqnarray*}
By the equivalence of inequalities
 \eqref{1.6} and \eqref{1.7} we have
\[\left|\left|\left|A_{1}A_{2}^{*}+A_{2}A_{3}^{*}+\cdots+A_{n}A_{1}^{*}\right|\right|\right|\leq\left|\left|\left|\sum_{i=1}^{n}A_{i}A_{i}^{*}\right|\right|\right|\]
for all unitarily invariant norms.
\end{proof}
To establish the next result we need the following Lemma. The lemma
is a basic triangle inequality comparing, in unitarily invariant
norms, the sum of two normal operators to the sum of their absolute
values.
\begin{lemma}\cite{[5]} If $A$ and $B$ are normal operators in ${\mathbb B}({\mathscr H})$, then
\begin{eqnarray}\label{2.2}
|||A+B|||\leq|||\ |A|+|B|\ |||.
\end{eqnarray}
for all unitarily invariant norms.\end{lemma}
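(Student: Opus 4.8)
The plan is to reduce the inequality to a single application of the Cauchy--Schwarz type estimate in inequality~\eqref{2.1}, carried out on the Hilbert space $\mathscr H\oplus\mathscr H$. The one place where normality is indispensable is the following symmetric polar factorization. If $A$ is normal, then by the spectral theorem $A=U|A|$ for a \emph{unitary} $U$ that commutes with $|A|$ (the phase of $A$), and since $U$ then commutes with $|A|^{1/2}$ as well, we may write
\[
A=|A|^{1/2}\,U\,|A|^{1/2},\qquad B=|B|^{1/2}\,V\,|B|^{1/2},
\]
with $U,V$ unitary. For a general non-normal operator the partial isometry in the polar decomposition does not commute with $|A|$, so this symmetrisation fails; this is precisely the point at which the hypothesis of normality is used.

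First I would introduce the block operators on $\mathscr H\oplus\mathscr H$
\[
\mathbf A=\mathbf B=\begin{bmatrix} |A|^{1/2} & |B|^{1/2}\\ 0 & 0\end{bmatrix},\qquad \mathbf X=\begin{bmatrix} U & 0\\ 0 & V\end{bmatrix}.
\]
A direct multiplication using the factorizations above gives $\mathbf A\mathbf X\mathbf B^{*}=(A+B)\oplus 0$, while the ``good'' product collapses cleanly to $\mathbf A\mathbf A^{*}=(|A|+|B|)\oplus 0$. Observe also that $\mathbf X$ is unitary, so $\|\mathbf X\|=1$.

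Then I would apply inequality~\eqref{2.1} to these operators to get $2\,|||(A+B)\oplus 0|||=2\,|||\mathbf A\mathbf X\mathbf B^{*}|||\le |||\mathbf A^{*}\mathbf A\,\mathbf X+\mathbf X\,\mathbf B^{*}\mathbf B|||$. The right-hand side is then controlled by the triangle inequality together with the basic submultiplicativity $|||YZ|||\le\|Y\|\,|||Z|||$ and $|||YZ|||\le|||Y|||\,\|Z\|$ of unitarily invariant norms: since $\|\mathbf X\|=1$ and $\mathbf A=\mathbf B$,
\[
|||\mathbf A^{*}\mathbf A\,\mathbf X+\mathbf X\,\mathbf B^{*}\mathbf B|||\le |||\mathbf A^{*}\mathbf A\,\mathbf X|||+|||\mathbf X\,\mathbf B^{*}\mathbf B|||\le 2\,|||\mathbf A^{*}\mathbf A|||.
\]
Now property~\eqref{1.5} lets me trade the unwieldy factor $\mathbf A^{*}\mathbf A$ for the clean one, namely $|||\mathbf A^{*}\mathbf A|||=|||\mathbf A\mathbf A^{*}|||=|||(|A|+|B|)\oplus 0|||$. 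Dividing by $2$ yields $|||(A+B)\oplus 0|||\le|||(|A|+|B|)\oplus 0|||$, and the equivalence of \eqref{1.6} and \eqref{1.7} upgrades this to $|||A+B|||\le|||\,|A|+|B|\,|||$.

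The main obstacle is conceptual rather than computational: it lies in finding the correct way to exploit normality. The symmetric factorization $A=|A|^{1/2}U|A|^{1/2}$ is exactly what forces $\mathbf A\mathbf A^{*}$ to reduce to the sum $|A|+|B|$, whereas $\mathbf A^{*}\mathbf A$ stays an intractable $2\times2$ block; property~\eqref{1.5} is the device that converts the bound in terms of $\mathbf A^{*}\mathbf A$ (which is what inequality~\eqref{2.1} naturally produces) into one in terms of $\mathbf A\mathbf A^{*}$. Once this factorization and pairing are set up, every remaining step is a routine manipulation with inequality~\eqref{2.1} and the elementary norm properties already recorded in the paper.
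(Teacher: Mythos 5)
Your proof is correct, but there is nothing in the paper to compare it against: the paper does not prove this lemma at all, it simply quotes it from Horn and Zhan \cite{[5]} and uses it as a black box. Your argument is therefore a genuine alternative to the cited source. Horn and Zhan obtain the inequality inside their general theory of C-S seminorms and Lieb functions, whereas you derive it from the Bhatia--Davis arithmetic--geometric mean inequality \eqref{2.1} --- the very lemma this paper already relies on --- by means of the symmetric factorization $A=|A|^{1/2}U|A|^{1/2}$, $B=|B|^{1/2}V|B|^{1/2}$ with unitary phases commuting with the moduli. All your computations check out: $\mathbf A\mathbf X\mathbf B^{*}=(A+B)\oplus 0$, $\mathbf A\mathbf A^{*}=(|A|+|B|)\oplus 0$, and the passage from the $\oplus\, 0$ inequality to the full one via the equivalence of \eqref{1.6} and \eqref{1.7} is legitimate because you have established the compressed inequality for \emph{all} unitarily invariant norms. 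Your approach buys two things the citation does not: it makes the paper self-contained (the lemma now follows from ingredients already on hand, namely \eqref{2.1}, \eqref{1.5}, and the Fan dominance equivalence), and it isolates exactly where normality enters --- the existence of a unitary phase commuting with $|A|$, which indeed fails for non-normal operators, as it must since the inequality itself fails in general. One small streamlining: since your $\mathbf X$ is unitary, you do not need the submultiplicativity estimate at all; unitary invariance gives the exact equalities $|||\mathbf A^{*}\mathbf A\,\mathbf X|||=|||\mathbf X\,\mathbf A^{*}\mathbf A|||=|||\mathbf A^{*}\mathbf A|||$ directly, after which \eqref{1.5} converts $|||\mathbf A^{*}\mathbf A|||$ into $|||(|A|+|B|)\oplus 0|||$ and the proof finishes as you say.
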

\begin{corollary}
Let $A_{1},A_{2},A_{3},A_{4}$  be projections in ${\mathbb
B}({\mathscr H})$. Then \begin{eqnarray}\label{2.3}
&&\left|\left|\left|\left(\sum_{i=1}^{4}(-1)^{i+1}A_{i}\right)\oplus0\oplus0\oplus0\right|\right|\right|\nonumber\\&\leq&|||(A_{1}+|A_{3}A_{1}|)\oplus\nonumber
(A_{2}+|A_{4}A_{2}|)\oplus(A_{3}+|A_{1}A_{3}|)\oplus(A_{4}+|A_{2}A_{4}|)|||\nonumber\\
\end{eqnarray} for all unitarily invariant norms. In particular,
\begin{eqnarray*}&&\left\|\sum_{i=1}^{4}(-1)^{i+1}A_{i}\right\|\\&\leq&\max\{\|A_{1}+|A_{3}A_{1}|\|,
\|A_{2}+|A_{4}A_{2}|\|,\|A_{3}+|A_{1}A_{3}|\|,\|A_{4}+|A_{2}A_{4}|\|\}\end{eqnarray*}
and
\begin{eqnarray*}&&\left\|\sum_{i=1}^{4}(-1)^{i+1}A_{i}\right\|_{p}\\
&\leq&\left(\left\|A_{1}+|A_{3}A_{1}|\right\|_{p}^{p}+\left\|A_{2}+|A_{4}A_{2}|\right\|_{p}^{p}+\left\|A_{3}
+|A_{1}A_{3}|\right\|_{p}^{p}+\left\|A_{4}+|A_{2}A_{4}|\right\|_{p}^{p}\right)^{1/p}\\
&&\qquad\qquad\qquad\qquad\qquad\qquad\qquad\qquad\qquad\qquad\qquad\qquad{\rm
~} (1\leq p<\infty).
\end{eqnarray*}
\end{corollary}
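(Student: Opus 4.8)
The plan is to squeeze both sides of \eqref{2.3} against the single quantity $|||(A_1+A_3)\oplus(A_2+A_4)|||$: I would bound the right-hand side from below by it using the normal-triangle inequality \eqref{2.2}, bound the left-hand side from above by it using an auxiliary positivity estimate, and then chain the two.

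First I would handle the right-hand side. Since the $A_i$ are projections, $A_i^2=A_i$, so each block $A_i+|A_jA_i|$ is a genuine absolute-value correction. Working on $\mathscr H^{\oplus4}$, set $\mathcal A=A_1\oplus A_2\oplus A_3\oplus A_4$ (positive) and let $\mathcal B$ be the self-adjoint operator whose only nonzero blocks are the $(1,3),(3,1)$ entries $A_1A_3,A_3A_1$ and the $(2,4),(4,2)$ entries $A_2A_4,A_4A_2$. Both are normal, $|\mathcal A|=\mathcal A$, and since $\bigl[\begin{smallmatrix}0&X\\X^{*}&0\end{smallmatrix}\bigr]$ has absolute value $|X^{*}|\oplus|X|$,
\[|\mathcal A|+|\mathcal B|=(A_1+|A_3A_1|)\oplus(A_2+|A_4A_2|)\oplus(A_3+|A_1A_3|)\oplus(A_4+|A_2A_4|),\]
which is exactly the argument of the right-hand norm. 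After the permutation reordering the summands as $(1,3,2,4)$, the operator $\mathcal A+\mathcal B$ becomes $\bigl[\begin{smallmatrix}A_1&A_1A_3\\A_3A_1&A_3\end{smallmatrix}\bigr]\oplus\bigl[\begin{smallmatrix}A_2&A_2A_4\\A_4A_2&A_4\end{smallmatrix}\bigr]=EE^{*}$ with $E=\bigl[\begin{smallmatrix}A_1\\A_3\end{smallmatrix}\bigr]\oplus\bigl[\begin{smallmatrix}A_2\\A_4\end{smallmatrix}\bigr]$, where I again use $A_i^2=A_i$ to recognize each block as a product. As $E^{*}E=(A_1+A_3)\oplus(A_2+A_4)$, property \eqref{1.5} gives $|||\mathcal A+\mathcal B|||=|||(A_1+A_3)\oplus(A_2+A_4)|||$, and \eqref{2.2} applied to $\mathcal A,\mathcal B$ yields that this equals $|||\mathcal A+\mathcal B|||\le|||\,|\mathcal A|+|\mathcal B|\,|||$, the right-hand side of \eqref{2.3}. (Equivalently one may read this step as two applications of \eqref{1.12} to the pairs $(A_1,A_3)$ and $(A_2,A_4)$; the advantage of the $\mathcal A,\mathcal B$ construction is that it produces the two pairs inside one norm, avoiding a separate merging argument.)

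Next I would reduce the left-hand side. Writing $S=A_1+A_3\ge0$ and $T=A_2+A_4\ge0$, the left-hand operator is $(S-T)\oplus0\oplus0\oplus0$, so it suffices to establish the auxiliary estimate $|||(S-T)\oplus0|||\le|||S\oplus T|||$ for positive $S,T$. This is where the main obstacle lies. The naive hope — that the positive and negative parts of $X:=S-T$ obey $X_+\le S$ and $X_-\le T$ as \emph{operator} inequalities, permitting a blockwise singular-value comparison — is false in general. Instead I would pass to eigenvalues: from $X\le S$ and $-X\le T$, Weyl's monotonicity principle gives $\lambda_j(X)\le\lambda_j(S)=s_j(S)$ and $\lambda_j(-X)\le\lambda_j(T)=s_j(T)$ for every $j$. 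Splitting the $k$ largest singular values of the self-adjoint $X$ into the $p$ arising from positive eigenvalues and the $q=k-p$ arising from negative ones, and summing, one gets $\sum_{j=1}^{k}s_j(S-T)\le\sum_{j=1}^{p}s_j(S)+\sum_{j=1}^{q}s_j(T)\le\sum_{j=1}^{k}s_j(S\oplus T)$ for all $k$; the Fan dominance principle then upgrades this to all unitarily invariant norms.

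Finally I would chain: the left side of \eqref{2.3} equals $|||(S-T)\oplus0|||\le|||S\oplus T|||=|||(A_1+A_3)\oplus(A_2+A_4)|||$, which by the first step is at most the right side of \eqref{2.3}. The two ``in particular'' statements are then the specializations to the operator norm and the Schatten $p$-norm, read off from \eqref{1.1} and \eqref{1.2} together with the observation that adjoining zero summands leaves both norms unchanged. The crux of the whole argument is the auxiliary estimate, precisely because the expected operator-inequality comparison breaks down and one must route through eigenvalue monotonicity and weak majorization.
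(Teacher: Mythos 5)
Your proof is correct, but it takes a genuinely different route from the paper's. The paper obtains \eqref{2.3} as a direct application of its main theorem: taking $n=4$, $B_{i}=A_{i}$, $X_{i}=(-1)^{i+1}I$ in Theorem \ref{th2.2} and using $A_{i}^{2}=A_{i}$ yields (after cancelling the common factor $2$) precisely your matrix $\mathcal A+\mathcal B$ up to signs, which are erased by multiplying with the diagonal unitary $I\oplus(-I)\oplus I\oplus(-I)$; the paper then finishes exactly as you do, applying \eqref{2.2} to the pair $\mathcal A,\mathcal B$ and computing $|\mathcal B|$ blockwise. You bypass Theorem \ref{th2.2} entirely and instead pivot through the intermediate quantity $|||(A_{1}+A_{3})\oplus(A_{2}+A_{4})|||$: on the right you use the identity $\mathcal A+\mathcal B=EE^{*}$ with $E^{*}E=(A_{1}+A_{3})\oplus0\oplus(A_{2}+A_{4})\oplus0$ (note the zero summands on ${\mathscr H}^{4}$, harmless under the paper's conventions but worth stating) together with \eqref{1.5}, which pins down $|||\mathcal A+\mathcal B|||$ \emph{exactly} rather than merely bounding it; on the left you prove the difference inequality $|||(S-T)\oplus0|||\leq|||S\oplus T|||$ for positive $S,T$ via Weyl monotonicity and Fan dominance, and your majorization argument there is correct --- in fact this is a known inequality of Bhatia and Kittaneh, so you could cite it instead of reproving it. Your route buys a strictly stronger statement, namely the interpolated chain with $|||(A_{1}+A_{3})\oplus(A_{2}+A_{4})\oplus0\oplus0|||$ sitting between the two sides of \eqref{2.3}, and it makes transparent the kinship with Kittaneh's inequality \eqref{1.12} (your right-hand step is essentially two applications of it, since $|B^{1/2}A^{1/2}|=|BA|$ for projections); the paper's route is shorter, avoids the eigenvalue analysis, and exhibits the corollary as a direct consequence of the main theorem, which is the point of the paper's exposition. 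One small caveat: in infinite dimensions your eigenvalue-splitting argument requires the operators to be compact (i.e., in the norm ideal) so that the eigenvalue sequences and the Fan dominance principle apply, though this matches the paper's own implicit level of rigor.
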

\begin{proof} Letting $n=4$, and replacing $ A_{i}$ and $B_{i}$ by $A_{i}$,
 and $X_{i}=(-1)^{i+1}I$ for $i=1,2,3,4$ in Theorem
 \ref{th2.2}, we get
\begin{eqnarray*}
&&\left|\left|\left|\left[
\begin{array}{cccc}
  \sum_{i=1}^{4}(-1)^{i+1}A_{i} & 0 & 0 & 0 \\
  0 & 0 & 0 & 0 \\
  0 & 0 & 0 & 0 \\
  0 & 0 & 0 & 0 \\
\end{array}\right]\right|\right|\right|\\
  &\leq&\left|\left|\left|\left[\begin{array}{cccc}
  A_{1} & 0 & A_{1}A_{3} & 0 \\
  0 & -A_{2} & 0 & -A_{2}A_{4} \\
  A_{3}A_{1} & 0 & A_{3} & 0 \\
  0 & -A_{4}A_{2} & 0 & -A_{4} \\
\end{array}\right]\right|\right|\right|\\&=&\left|\left|\left|\left[\begin{array}{cccc}
  A_{1} & 0 & A_{1}A_{3} & 0 \\
  0 & A_{2} & 0 & A_{2}A_{4} \\
  A_{3}A_{1} & 0 & A_{3} & 0 \\
  0 & A_{4}A_{2} & 0 & A_{4} \\
\end{array}\right]\right|\right|\right|\quad\mbox{(\rm{~by~unitary~invariance~of~~the~norm~})}\\&=&\left|\left|\left|\left[\begin{array}{cccc}
  A_{1} & 0 & 0 & 0 \\
  0 & A_{2} & 0 & 0 \\
  0 & 0 & A_{3} & 0 \\
  0 & 0 & 0 & A_{4} \\
\end{array}\right]
+\left[\begin{array}{cccc}
  0 & 0 & A_{1}A_{3} & 0 \\
  0 & 0 & 0 & A_{2}A_{4} \\
  A_{3}A_{1} & 0 & 0 & 0 \\
  0 & A_{4}A_{2} & 0 & 0 \\
\end{array}\right]\right|\right|\right|\\&\leq&\left|\left|\left|\ \left|\left[\begin{array}{cccc}
  A_{1} & 0 & 0 & 0 \\
  0 & A_{2} & 0 & 0 \\
  0 & 0 & A_{3} & 0 \\
  0 & 0 & 0 & A_{4} \\
\end{array}\right]\right|
+\left|\left[\begin{array}{cccc}
  0 & 0 & A_{1}A_{3} & 0 \\
  0 & 0 & 0 & A_{2}A_{4} \\
  A_{3}A_{1} & 0 & 0 & 0 \\
  0 & A_{4}A_{2} & 0 & 0 \\
\end{array}\right]\right|\ \right|\right|\right|\quad\mbox{(\rm{by \eqref{2.2}})}\\&=&\left|\left|\left|\left[\begin{array}{cccc}
  A_{1} & 0 & 0 & 0 \\
  0 & A_{2} & 0 & 0 \\
  0 & 0 & A_{3} & 0 \\
  0 & 0 & 0 & A_{4} \\
\end{array}\right]
+\left[\begin{array}{cccc}
  |A_{3}A_{1}| & 0 & 0 & 0 \\
  0 & |A_{4}A_{2}| & 0 & 0 \\
  0 & 0 & |A_{1}A_{3}| & 0 \\
  0 & 0 & 0 & |A_{2}A_{4}| \\
\end{array}\right]\right|\right|\right|\\&=&\left|\left|\left|\left[\begin{array}{cccc}
  A_{1}+|A_{3}A_{1}| & 0 & 0 & 0 \\
  0 & A_{2}+|A_{4}A_{2}| & 0 & 0 \\
  0 & 0 & A_{3}+|A_{1}A_{3}| & 0 \\
  0 & 0 & 0 & A_{4}+|A_{2}A_{4}| \\
\end{array}\right]\right|\right|\right| .\end{eqnarray*} This prove inequality \eqref{2.3}.\\The rest inequalities follow
from \eqref{2.3}, \eqref{1.1} and \eqref{1.2}.
\end{proof}
\begin{corollary}
Let $A_{1},A_{2},\ldots,A_{n}$ be positive operators in ${\mathbb
B}({\mathscr H})$. Then
\begin{eqnarray}\label{2.4}\|A_{1}+A_{2}+\cdots+A_{n}\|\leq
\max\big\{\|A_{i}+(n-1)\|A_{i}\|\|: i=1,2,\ldots,n \big\}.\end{eqnarray}
\end{corollary}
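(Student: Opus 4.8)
The plan is to prove this one entirely at the level of operator order and the operator norm, \emph{without} invoking Theorem \ref{th2.2}. The key observation is that, unlike the earlier corollaries, this statement concerns only the operator norm $\|\cdot\|$, and $\|\cdot\|$ is precisely the norm that is monotone with respect to the order $\leq$ on positive operators. So the natural route is to exhibit a single positive operator that dominates $\sum_{i=1}^{n}A_i$ and whose norm is one of the quantities appearing in the maximum on the right.

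First I would record the elementary bound $A_i\leq\|A_i\|\,I$, valid for every positive operator since $\langle A_i x,x\rangle\leq\|A_i\|\,\|x\|^{2}$. Next I would fix an index $k$ with $\|A_k\|=\max_{1\leq i\leq n}\|A_i\|$. Then for each $j\neq k$ we have $A_j\leq\|A_j\|\,I\leq\|A_k\|\,I$, and adding the $n-1$ inequalities of this form gives $\sum_{j\neq k}A_j\leq(n-1)\|A_k\|\,I$. Adding $A_k$ to both sides yields the operator inequality
\[
\sum_{i=1}^{n}A_i\;\leq\;A_k+(n-1)\|A_k\|\,I.
\]
Both sides are positive operators, so monotonicity of the operator norm on the positive cone gives $\bigl\|\sum_{i=1}^{n}A_i\bigr\|\leq\|A_k+(n-1)\|A_k\|\,I\|$, and since this last quantity is one of the terms over which the maximum is taken, it is bounded above by $\max_{1\leq i\leq n}\|A_i+(n-1)\|A_i\|\,I\|$, which is exactly \eqref{2.4}.

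I do not expect a serious obstacle. The only point requiring care is the choice of the maximizing index $k$: the domination $A_j\leq(n-1)\|A_k\|\,I$ uses $\|A_j\|\leq\|A_k\|$, so it is essential to peel off the summand of \emph{largest} norm rather than an arbitrary one. It is worth remarking that the machinery of Theorem \ref{th2.2} is not actually needed here; the result simply reflects the monotone behaviour of the operator norm on positive operators, which is also why the conclusion is stated for $\|\cdot\|$ alone and not for a general unitarily invariant norm. As a consistency check, one sees that since each $A_i\geq0$ the right-hand side equals $n\max_{i}\|A_i\|$, so \eqref{2.4} is a repackaging of the plain estimate $\bigl\|\sum_i A_i\bigr\|\leq n\max_i\|A_i\|$ into the displayed max-over-shifted-operators form.
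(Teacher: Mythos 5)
Your proof is correct, and it takes a genuinely different route from the paper. The paper obtains \eqref{2.4} as an application of Theorem \ref{th2.2}: setting $X_i=I$ and replacing $A_i,B_i$ by $A_i^{1/2}$ gives $\|A_1+\cdots+A_n\|\leq\big\|\big[A_i^{1/2}A_j^{1/2}\big]_{i,j}\big\|$, and the Gram-type block matrix is then dominated by $\bigoplus_i\big(A_i+(n-1)\|A_i\|\big)$ via the inequality \eqref{2.5}, which the paper proves by factoring $nC=MM^{*}$ for a suitable $M$, so that $C\geq0$. You bypass all of this: from $A_j\leq\|A_j\|I\leq\|A_k\|I$ for the maximizing index $k$ you get $\sum_{i=1}^{n}A_i\leq A_k+(n-1)\|A_k\|I$ directly, and conclude by monotonicity of the operator norm on the positive cone (which holds since $\|S\|=\sup_{\|x\|=1}\langle Sx,x\rangle$ for $S\geq0$) — the same monotonicity the paper implicitly uses to pass from \eqref{2.5} to the norm bound, but applied on ${\mathscr H}$ rather than on $\oplus_{i=1}^{n}{\mathscr H}$. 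Your choice of the index of largest norm is indeed the one point needing care, and you handle it correctly. What each approach buys: the paper's derivation showcases Theorem \ref{th2.2} and produces the intermediate bound by the Gram matrix $\big[A_i^{1/2}A_j^{1/2}\big]$, which is genuinely sharper than \eqref{2.4} and of independent interest; your argument exposes that the stated corollary itself is elementary — as your closing remark correctly notes, since $\|A_i+(n-1)\|A_i\|I\|=n\|A_i\|$ for $A_i\geq0$, \eqref{2.4} is equivalent to $\|\sum_i A_i\|\leq n\max_i\|A_i\|$, which already follows from the triangle inequality. So your proof is shorter and self-contained, while the paper's is weaker as a standalone argument for this statement but richer in by-products.
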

\begin{proof}
First we show that
\begin{eqnarray}\label{2.5} &&\left[\begin{array}{cccc}
A_{1} &A_{1}^{1/2}A_{2}^{1/2}& \cdots & A_{1}^{1/2}A_{n}^{1/2}\\
A_{2}^{1/2}A_{1}^{1/2}& A_{2} &  \cdots &A_{2}^{1/2}A_{n}^{1/2}\\
\vdots & \vdots &\ddots & \vdots\\
A_{n}^{1/2}A_{1}^{1/2} & A_{n}^{1/2}A_{2}^{1/2} & \cdots &A_{n}
\end{array}\right]\\&\leq &\left[\begin{array}{cccc}
A_{1}+(n-1)\|A_{1}\| &0& \cdots & 0\\
0& A_{2}+(n-1)\|A_{2}\| &  \cdots &0\\
\vdots & \vdots &\ddots & \vdots\\
0 & 0 & \cdots &A_{n}+(n-1)\|A_{n}\| \end{array}\right].\nonumber
\end{eqnarray}
It is enough to show that
\[C=\left[\begin{array}{cccc}
(n-1)A_{1} &-A_{1}^{1/2}A_{2}^{1/2}& \cdots & -A_{1}^{1/2}A_{n}^{1/2}\\
-A_{2}^{1/2}A_{1}^{1/2}&(n-1)A_{2} &  \cdots &-A_{2}^{1/2}A_{n}^{1/2}\\
\vdots & \vdots &\ddots & \vdots\\
-A_{n}^{1/2}A_{1}^{1/2} &- A_{n}^{1/2}A_{2}^{1/2} & \cdots
&(n-1)A_{n}
\end{array}\right]\geq0.\]
To see this, we note that
\begin{eqnarray*}
nC=\left[\begin{array}{cccc}
(n-1)A_{1}^{1/2} &-A_{1}^{1/2}& \cdots & -A_{1}^{1/2}\\
-A_{2}^{1/2}&(n-1)A_{2}^{1/2} &  \cdots &-A_{2}^{1/2}\\
\vdots & \vdots &\ddots & \vdots\\
-A_{n}^{1/2} &- A_{n}^{1/2} & \cdots &(n-1)A_{n}^{1/2}
\end{array}\right] \times\\
\left[\begin{array}{cccc}
(n-1)A_{1}^{1/2} &-A_{2}^{1/2}& \cdots & -A_{n}^{1/2}\\
-A_{1}^{1/2}&(n-1)A_{2}^{1/2} &  \cdots &-A_{n}^{1/2}\\
\vdots & \vdots &\ddots & \vdots\\
-A_{1}^{1/2} &- A_{2}^{1/2} & \cdots &(n-1)A_{n}^{1/2}
\end{array}\right]\geq 0
\end{eqnarray*}
Next, by letting $X_{i}=I$ and
replacing both $A_{i}$ and $B_{i}$ by $A_{i}^{1/2}$ in Theorem
\ref{th2.2}, we obtain
\begin{eqnarray*}\|A_{1}+A_{2}+\cdots+A_{n}\|&\leq&\left\|\left[\begin{array}{cccc}
A_{1} &A_{1}^{1/2}A_{2}^{1/2}& \cdots & A_{1}^{1/2}A_{n}^{1/2}\\
A_{2}^{1/2}A_{1}^{1/2}& A_{2} &  \cdots &A_{2}^{1/2}A_{n}^{1/2}\\
\vdots & \vdots &\ddots & \vdots\\
A_{n}^{1/2}A_{1}^{1/2} & A_{n}^{1/2}A_{2}^{1/2} & \cdots &A_{n}
\end{array}\right]\right\|\end{eqnarray*}
\begin{eqnarray*}&\leq& \left\|\left[\begin{array}{cccc}
A_{1}+(n-1)\|A_{1}\| &0& \cdots & 0\\
0& A_{2}+(n-1)\|A_{2}\| &  \cdots &0\\
\vdots & \vdots &\ddots & \vdots\\
0 & 0 & \cdots &A_{n}+(n-1)\|A_{n}\| \end{array}\right]\right\|\\
&&\qquad\qquad\qquad\qquad\qquad\qquad\qquad\qquad\qquad\qquad\qquad({\rm by~} \eqref{2.5}).
\end{eqnarray*}
Hence  \[\|A_{1}+A_{2}+\cdots+A_{n}\|\leq
\max\big\{\big\|A_{i}+(n-1)\|A_{i}\|\big\|: i=1,2,\ldots,n \big\}.\]
\end{proof}
\begin{corollary}
Let $A$ and $B$ be normal operators in ${\mathbb B}(\mathscr H)$, then
\[\|A+B\|\leq\max\big\{\big\||A|+\|A\|\big\|,\big\||B|+\|B\|\big\|\big\}.\]
\end{corollary}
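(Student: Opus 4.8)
The plan is to reduce the claim for normal operators to the case of positive operators that has already been settled in \eqref{2.4}. The two ingredients are the normal-operator triangle inequality \eqref{2.2} and the preceding corollary \eqref{2.4} specialized to $n=2$, and the whole argument is just a chaining of these two facts.

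First, since $A$ and $B$ are normal, I would apply \eqref{2.2} with the operator norm (which is one of the unitarily invariant norms) to obtain
\[ \|A+B\| \leq \big\|\,|A|+|B|\,\big\|. \]
This passes from the normal operators $A,B$ to the positive operators $|A|,|B|$, which is exactly the setting in which \eqref{2.4} operates.

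Next, I would apply \eqref{2.4} to the two positive operators $|A|$ and $|B|$. With $n=2$ the factor $n-1$ equals $1$, so \eqref{2.4} reads
\[ \big\|\,|A|+|B|\,\big\| \leq \max\big\{\,\big\|\,|A|+\||A|\|\,\big\|,\ \big\|\,|B|+\||B|\|\,\big\|\,\big\}. \]
Finally I would invoke the elementary identity $\||T|\|=\|T\|$, which holds because $|T|=(T^{*}T)^{1/2}$ is positive with $\||T|\|^{2}=\||T|^{2}\|=\|T^{*}T\|=\|T\|^{2}$. Substituting $\||A|\|=\|A\|$ and $\||B|\|=\|B\|$ into the last display and chaining it with the first inequality yields precisely
\[ \|A+B\|\leq\max\big\{\,\big\|\,|A|+\|A\|\,\big\|,\ \big\|\,|B|+\|B\|\,\big\|\,\big\}, \]
as required.

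I do not expect a genuine obstacle here, since the argument is a two-step combination of results already available in the paper. The only points needing care are verifying that the hypotheses of \eqref{2.2} are met (this is exactly the assumed normality of $A$ and $B$, which is essential since \eqref{2.2} fails for general operators) and observing that the scalar $\|A\|$ appearing in \eqref{2.4} coincides with $\||A|\|$, so that the bound produced by \eqref{2.4} matches the statement verbatim rather than merely up to relabeling.
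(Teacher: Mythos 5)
Your proposal is correct and follows essentially the same route as the paper: first apply the normal-operator inequality \eqref{2.2} in the operator norm to get $\|A+B\|\leq\big\||A|+|B|\big\|$, then apply \eqref{2.4} with $n=2$ to the positive operators $|A|$ and $|B|$. Your explicit verification that $\||T|\|=\|T\|$ is a detail the paper leaves implicit, but it is the same argument.
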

\begin{proof}
Letting  $n=2$ , $A_{1}=|A|$ , $A_{2}=|B|$ in \eqref{2.4}, therefore
\begin{eqnarray*}
\|A+B\|&\leq&\big \||A|+|B|\big\|\qquad\qquad\qquad({\rm by~} \eqref{2.2})\\
&\leq&\max\big\{\big\||A|+\|A\|\big\|,\big\||B|+\|B\|\big\|\big\}.
\end{eqnarray*}
\end{proof}
To establish the next result we need the following lemma.

\begin{lemma}\cite[Theorem 1.1]{[6]} If $A,B,C$ and $D$ are operators in ${\mathbb B}({\mathscr H})$, then
\begin{eqnarray}\label{2.6}
\left\|\left[\begin{array}{cc}
A & B \\
C & D
\end{array}\right]\right\|
\leq\left\|\left[\begin{array}{cc}
\|A\| & \|B\| \\
\|C\| & \|D\|
\end{array}\right]\right\|.
\end{eqnarray}\end{lemma}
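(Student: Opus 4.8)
The plan is to bound the operator norm of the block matrix $T=\left[\begin{array}{cc}A & B\\ C & D\end{array}\right]$, viewed as an operator on $\mathscr H\oplus\mathscr H$, by testing it against an arbitrary unit vector and then recognizing the resulting scalar estimate as the action of the companion matrix $M=\left[\begin{array}{cc}\|A\| & \|B\|\\ \|C\| & \|D\|\end{array}\right]$ on a unit vector in $\mathbb C^{2}$. First I would fix a unit vector $(x,y)\in\mathscr H\oplus\mathscr H$, so that $\|x\|^{2}+\|y\|^{2}=1$, and compute $T(x,y)=(Ax+By,\,Cx+Dy)$, whose squared norm equals $\|Ax+By\|^{2}+\|Cx+Dy\|^{2}$.

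Next I would apply the triangle inequality together with the defining bound $\|Sz\|\le\|S\|\,\|z\|$ to each component, obtaining $\|Ax+By\|\le\|A\|\,\|x\|+\|B\|\,\|y\|$ and $\|Cx+Dy\|\le\|C\|\,\|x\|+\|D\|\,\|y\|$. The key observation is that the two right-hand sides are exactly the coordinates of $Mu$, where $u=(\|x\|,\|y\|)\in\mathbb C^{2}$. Because every entry of $M$ and every coordinate of $u$ is nonnegative, nothing is lost in passing to these bounds, and one reads off $\|T(x,y)\|^{2}\le(\|A\|\,\|x\|+\|B\|\,\|y\|)^{2}+(\|C\|\,\|x\|+\|D\|\,\|y\|)^{2}=\|Mu\|^{2}$.

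Finally, since $\|u\|^{2}=\|x\|^{2}+\|y\|^{2}=1$, the vector $u$ is a unit vector in $\mathbb C^{2}$, so $\|Mu\|\le\|M\|$. Combining the two displays gives $\|T(x,y)\|\le\|M\|$ for every unit vector $(x,y)\in\mathscr H\oplus\mathscr H$, and taking the supremum over all such vectors yields $\|T\|\le\|M\|$, which is precisely \eqref{2.6}. I do not anticipate a genuine obstacle here; the proof is essentially a direct norm estimate on vectors. The only step requiring a moment of care is the transition from the Hilbert-space inequality to the scalar-matrix norm, namely noticing that $(\|x\|,\|y\|)$ has unit length in $\mathbb C^{2}$ and that the nonnegativity of the entries of $M$ makes the triangle-inequality bound tight enough to be identified with $\|Mu\|$ rather than merely dominated by it.
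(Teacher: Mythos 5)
Your proof is correct, but there is nothing in the paper to compare it against: the paper does not prove this lemma, it imports it wholesale from Hou and Du \cite[Theorem 1.1]{[6]}. What you have written is therefore a self-contained elementary substitute for the citation, and it is sound at every step: for a unit vector $(x,y)\in\mathscr H\oplus\mathscr H$ you have $\|T(x,y)\|^{2}=\|Ax+By\|^{2}+\|Cx+Dy\|^{2}$, the componentwise bounds $\|Ax+By\|\leq\|A\|\,\|x\|+\|B\|\,\|y\|$ and $\|Cx+Dy\|\leq\|C\|\,\|x\|+\|D\|\,\|y\|$ may be squared and summed because both sides are nonnegative, and the resulting right-hand side is exactly $\|Mu\|^{2}$ with $u=(\|x\|,\|y\|)$ a unit vector, whence $\|T\|\leq\|M\|$. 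One small correction to your closing remark: no appeal to the nonnegativity of the entries of $M$ is needed to identify the bounding pair with the coordinates of $Mu$ --- that identification is just the definition of matrix--vector multiplication; nonnegativity matters only in that it keeps the squaring step legitimate, which is automatic here since the bounds are sums of nonnegative terms. Two points of comparison with the cited result: first, Hou and Du state their theorem for general $n\times n$ operator matrices, and your argument extends verbatim to that setting (replace the pair of componentwise estimates by $n$ of them and note that $(\|x_{1}\|,\ldots,\|x_{n}\|)$ is again a unit vector); second, since the paper only invokes the $2\times2$ case, in the proofs of \eqref{2.7} and \eqref{2.8}, your six-line estimate could replace the citation entirely and make that part of the paper self-contained, at the cost of forgoing whatever additional refinements the original reference provides.
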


\begin{corollary}
Let $A\in {\mathbb B}({\mathscr H})$. Then
\begin{eqnarray}\label{2.7}
\|AA^{*}+A^{*}A\|&\leq&\|A^{2}\|+\|A\|^{2}
\end{eqnarray}
and
\begin{eqnarray}\label{2.8}
\|AA^{*}-A^{*}A\|&\leq&\|A\|^{2}.\end{eqnarray}
\end{corollary}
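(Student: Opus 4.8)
The plan is to derive both \eqref{2.7} and \eqref{2.8} from the $n=2$ instance of Theorem \ref{th2.2}, the only real creativity being a careful choice of the operators $A_i,B_i,X_i$ so that the left-hand side of the theorem collapses to $AA^{*}\pm A^{*}A$. Throughout I work with the operator norm $\|\cdot\|$, which is itself a unitarily invariant norm, and I use repeatedly that $\|A^{*}A\|=\|AA^{*}\|=\|A\|^{2}$ and $\|A^{*2}\|=\|(A^{2})^{*}\|=\|A^{2}\|$.

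For \eqref{2.7} I would take $n=2$ and set $A_{1}=B_{1}=A$, $A_{2}=B_{2}=A^{*}$, $X_{1}=X_{2}=I$. Then $A_{1}X_{1}B_{1}^{*}+A_{2}X_{2}B_{2}^{*}=AA^{*}+A^{*}A$, so the left-hand side of Theorem \ref{th2.2} equals $2\|(AA^{*}+A^{*}A)\oplus0\|=2\|AA^{*}+A^{*}A\|$. A direct computation of the four blocks on the right-hand side shows that every entry carries a common factor $2$, which after cancellation gives
\[\|AA^{*}+A^{*}A\|\leq\left\|\begin{bmatrix} A^{*}A & A^{*2}\\ A^{2} & AA^{*}\end{bmatrix}\right\|.\]

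Next I invoke inequality \eqref{2.6} to pass to the matrix of norms,
\[\left\|\begin{bmatrix} A^{*}A & A^{*2}\\ A^{2} & AA^{*}\end{bmatrix}\right\|\leq\left\|\begin{bmatrix} \|A\|^{2} & \|A^{2}\|\\ \|A^{2}\| & \|A\|^{2}\end{bmatrix}\right\|.\]
The right-hand side is the operator norm of a $2\times2$ real symmetric scalar matrix $\left[\begin{smallmatrix} a & b\\ b & a\end{smallmatrix}\right]$ with $a=\|A\|^{2}\geq0$ and $b=\|A^{2}\|\geq0$; its eigenvalues are $a\pm b$, and since $a+b\geq|a-b|$ the norm equals $a+b=\|A\|^{2}+\|A^{2}\|$. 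This yields \eqref{2.7}.

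For \eqref{2.8} I keep the same $A_{i},B_{i}$ but toggle one sign, taking $X_{1}=I$ and $X_{2}=-I$. Now $A_{1}X_{1}B_{1}^{*}+A_{2}X_{2}B_{2}^{*}=AA^{*}-A^{*}A$, while in the right-hand matrix the two off-diagonal blocks become $A^{*2}X_{2}+A^{*2}=0$ and $A^{2}X_{1}+X_{2}A^{2}=0$, so the whole matrix reduces to the block diagonal $2\big((A^{*}A)\oplus(-AA^{*})\big)$. By \eqref{1.1} its norm is $2\max(\|A^{*}A\|,\|AA^{*}\|)=2\|A\|^{2}$, whence $\|AA^{*}-A^{*}A\|\leq\|A\|^{2}$, which is \eqref{2.8}; here inequality \eqref{2.6} is not even needed since the off-diagonal terms cancel outright. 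The only genuine obstacle is spotting the substitution $A_{1}=B_{1}=A$, $A_{2}=B_{2}=A^{*}$ together with the sign change in the $X_{i}$; once that is in place, the remaining steps are the mechanical block computation, the passage \eqref{2.6}, and the elementary spectral norm of a $2\times2$ scalar matrix.
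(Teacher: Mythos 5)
Your proposal is correct and follows essentially the same route as the paper's own proof: the identical substitutions in Theorem \ref{th2.2} ($n=2$, $A_{1}=B_{1}=A$, $A_{2}=B_{2}=A^{*}$, with $X_{1}=X_{2}=I$ for \eqref{2.7} and $X_{1}=I=-X_{2}$ for \eqref{2.8}), the same passage to the $2\times2$ matrix of norms, and the same spectral computation giving $\|A\|^{2}+\|A^{2}\|$. If anything your write-up is cleaner, since the paper twice miscites this step as ``by inequality \eqref{2.2}'' when the lemma actually used is the Hou--Du inequality \eqref{2.6}, and you rightly observe that for \eqref{2.8} the off-diagonal blocks cancel outright, so no norm-matrix lemma is needed there.
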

\begin{proof}
Letting  $n=2$ , $A_{1}=B_{1}=A$ , $A_{2}=B_{2}=A^{*}$ and
$X_1=X_2=I$ in Theorem \ref{th2.2} to get
\begin{eqnarray*}
\|AA^{*}+A^{*}A\| &\leq&\left\|\left[\begin{array}{cc}
  A^{*}A & A^{* 2} \\
  A^{2} & AA^{*}
\end{array}\right]\right\|\\
&\leq&\left\|\left[\begin{array}{cc}
  \|A^{*}A\| & \|A^{* 2}\| \\
  \|A^{2}\| & \|AA^{*}\|
\end{array}\right]\right\|\hspace{2cm}\mbox{(\rm{by
inequality \eqref{2.2}})}
\end{eqnarray*}
Since
\[\left[\begin{array}{cc}
\|A^{*}A\| & \|A^{* 2}\| \\
\|A^{2}\| & \|AA^{*}\|
\end{array}\right]=\left[\begin{array}{cc}
\|A\|^{2} & \|A^{2}\| \\
\|A^{2}\| & \|A\|^{2}
\end{array}\right]\]is self-adjoint, the usual operator norm of this matrix is equal to its spectral radius,
so
 \[\left\|\left[\begin{array}{cc}
  \|A\|^{2} & \|A^{2}\| \\
  \|A^{2}\| & \|A\|^{2}
  \end{array}\right]\right\|=\|A^{2}\|+\|A\|^{2}.\]

This prove inequality \eqref{2.7}. To prove inequality \eqref{2.8},
letting $n=2$ , $A_{1}=B_{1}=A$ , $A_{2}=B_{2}=A^{*}$ and
$X_1=I=-X_2$ in Theorem \ref{th2.2}, we get
\begin{eqnarray*}\|AA^{*}-A^{*}A\|
&\leq&\left\|\left[\begin{array}{cc}
  A^{*}A & 0 \\
  0 & AA^{*}
\end{array}\right]\right\|\\
&\leq&\left\|\left[\begin{array}{cc}
  \|A^{*}A\| & 0 \\
 0 & \|AA^{*}\|
\end{array}\right]\right\|\hspace{2cm}\mbox{(\rm{by
inequality
\eqref{2.2}})}\\\\
&=&\|A\|^{2}.
\end{eqnarray*}
\end{proof}

\end{document}